\newtheorem{theorem}{Theorem}
\newtheorem{lemma}{Lemma}
\begin{document}

%\begin{frontmatter}

%% Title, authors and addresses

\title{Closed and asymptotic formulas for energy of some circulant graphs}

\author[D. Bl\'azquez-Sanz]{David Bl\'azquez-Sanz}
\author[C. Mar\'in]{Carlos Alberto Mar\'in Arango}

\address{Universidad Nacional de Colombia - Sede Medell\'in \hfill\break\indent  Facultad de Ciencias 
\hfill\break\indent Escuela de Matem\'aticas \hfill\break\indent  Medell\'in, Colombia}
\email{dblazquezs@unal.edu.co}

\address{Instituto de Matem\'aticas \hfill\break\indent  Universidad de Antioquia \hfill\break\indent Medell\'in, Colombia}
\email{calberto.marin@matematicas.udea.edu.co}

\maketitle

\begin{abstract}
We consider circulant graphs $G(r,N)$ where the vertices are the integers modulo $N$ and the neighbours of $0$ are \linebreak $\{-r,\ldots,-1,1,\ldots,r\}$. The energy of $G(r,N)$ is a trigonometric sum of $N\times r$ terms. For low values of $r$ we compute this sum explicitly. We also study the asymptotics of the energy of $G(r,N)$ for $N\to\infty$.
There is a known integral formula for the linear growth coefficient, we find a new expression of the form of a finite trigonometric sum with $r$ terms. As an application we show that in the family $G(r,N)$ for $r\leq 4$ there is a finite number of hyperenergetic graphs. On the other hand, for each $r>4$ there is at most a finite number of non-hyperenergetic graphs of the form $G(r,N)$. Finally we show that the graph $G(r,2r+1)$ minimizes the energy among all the regular graphs of degree $2r$.
\end{abstract}

\noindent {\bf Keywords:} Circulant graph, Graph energy, Finite Fourier transform.
{2010 \bf MSC} 05C35; 05C50; 42A05.

%% main text
\section{Introduction}
\label{S:1}
Let $G$ be a graph with $N$ vertices and eigenvalues $\lambda_1,\dots,\lambda_N$. 
The energy of $G$ is defined as the sum of the absolute values of its eigenvalues:
$$\mathcal{E}(G) = \sum_{j=1}^N |\lambda_j|.$$
This concept was introduced in the mathematical literature by Gutman in 1978, \cite{Gut-78}. For further details on this theory we refer to \cite{Li-12} and the literature cited in.

The $n$-vertex complete graph $K_n$ has eigenvalues $n-1$ y $-1$ ($(n-1)$ times). Therefore, $\mathcal{E}(K_n)=2(n-1)$. In \cite{Gut-78}, it was conjectured that the complete graph $K_n$ has the largest energy among all $n$-vertex graphs $G$, that is,
$\mathcal{E}(G)\le 2(n-1)$ with equality iff $G=K_n$.
By means of counterexamples, this conjecture was shown to be false, see for instance \cite{walikar-01}.  

An $n$-vertex graph $G$ whose energy satisfies $\mathcal{E}(G) > 2(n-1)$ is called {\em{hyperenergetic\/}}. These graphs has been introduced in \cite{Gut-99} in relation with some problems of molecular chemistry. A simplest construction of a family of hyperenergetic graphs is due to Walikar et al, \cite{Walikar-99}, where authors showed that the line graph of $K_n$, $n\ge 5$ is hyperenergetic. There are a number of other recent results on hyperenergetic graphs \cite{Balakrishnan2004, Koolen-2000}. There are also some recent results for the hyperenergetic circulant graphs, see for instance \cite{Shparlinski2006, Stevanovic2005}.

In this paper we consider the following family of circulant graphs. For each $N$ and $r$ with
$r\leq \lfloor \frac{N}{2}\rfloor$ we define the circulant graph $G(r,N)$ with vertices 
$0,1,\ldots,N-1$ in which there is an edge between $m$ and $n$ if the inequation
$|i-j + xN|\leq r$ has a solution $x\in\mathbb Z$, see fig. \ref{figure_circulant}.
We give a closed formula for the energy of the circulant graph $G(r,N)$. Based on that formula we also show that the energy of the graph $K_{2N}-M$ obtaining by deleting the edges of a perfect matching of a complete graph of index $2N$ is non-hyperenegetic. Moreover, we show that the graph $K_{2N}-M$ has minimal energy over the set of regular graphs of degree $2N-2$. Finally, we find some bounds and explicit expressions for $lim_{N\to\infty} \frac{\mathcal{E}\left(G(r,N)\right)}{N-1}$. %In particular we will conclude that for $r=1,2,3,4$ this value is lower that $2$ and for $r\ge 5$ this value es bigger that $2$. 
This allows us to conclude that for $r\le 4$ there is finite number of hyperenergetic graphs of the form $G(r,N)$; moreover, for $r\ge 5$ there is a finite number of non-hyperenergetic graphs of the form $G(r,N)$.

\begin{figure}[h!]
\begin{center}
\begin{pspicture}(-1,-1)(8,4)%\grilla
\rput(2,2){\color{black}\textbullet}
\rput(2,1){\color{black}\textbullet}
\rput(5,2){\color{black}\textbullet}
\rput(5,1){\color{black}\textbullet}
\rput(4,3){\color{black}\textbullet}
\rput(4,0){\color{black}\textbullet}
\rput(3,0){\color{black}\textbullet}
\rput(2,1.5){$\vdots$}
\rput(3.5,3){$\hdots$}
\rput{45}(2.5,2.5){$\dots$}
\rput{315}(2.5,0.5){$\dots$}
\psline(4,3)(5,2)(5,1)(4,0)(3,0)
\psline(4,3)(5,1)
\psline(5,2)(4,0)
\psline(5,2)(3,3)
\psline(5,1)(3,0)
\psline(3,0.5)(4,0)
\end{pspicture}
\caption{$\text{The graph } G(2,N)$}\label{figure_circulant}
\end{center}
\end{figure}
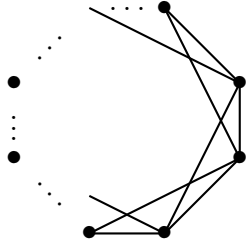

\section{Preliminaries}

Let us remark the following easy to check facts:
\begin{itemize}
\item[(a)] $G(1,N)$ is the $N$-cycle graph.
\item[(b)] $G(r,2r+1)$ is the complete graph $K_{2r+1}$.
\item[(c)] $G(r,2r+2)$ is $K_{2r+2}-M$ the complement of a perfect matching of the edges in a complete graph $K_{2r+1}$.
\item[(d)] $G(r,2r+3)$ is $K_{2r+3}-H$ the complement of a hamiltonian cycle in a complete graph $K_{2r+3}$.
\end{itemize}

The eigenvalues of $G(r,N)$ can be computed by terms of a finite Fourier transform, it turns out that if $\lambda(r,N,k)$ denotes the $k$-th eigenvalue of $G(r,N)$ then;
$$\lambda(r,N,k) = u\left(r,\frac{2k\pi}{N}\right)$$
where
$$u(r,\theta) = 2\sum_{m=1}^r\cos(m\theta).$$
Thus, the energy $\mathcal{E}(r,N)$ for the circulant graph $G(r,N)$ is given by the expression:
\begin{equation}\label{qe:energy_sum}
\mathcal{E}(r,N) = 2\sum_{k=0}^{N-1} \left| \sum_{m=1}^r \cos\left(\frac{2km\pi}{N}\right) \right|.
\end{equation}

\section{Closed formula for energy}

For small values of $r$, we can split the trigonometric sum \eqref{qe:energy_sum} defining $\mathcal{E}(r,N)$ into positive and negative parts:
$$\mathcal{E}(r,N) = 2\sum_{m=1}^r
\left(\sum_{u \left(r,\frac{2\pi k}{N}\right)\geq 0}\cos\left(\frac{2km\pi}{N}\right)-
\sum_{u \left(r,\frac{2\pi k}{N}\right)< 0}\cos\left(\frac{2km\pi}{N}\right)
\right).$$
For each $r$ we have a finite number of trigonometric sums along arithmetic sequences. 
Those sums can be computed explicitly, by means of the following elementary Lemma on trigonometric sums.

\begin{lemma}\label{lem:trig_sum}
Let $a_k = a_0 + rk$ be an arithmetic sequence of real numbers with $r\neq 0$. The following identities hold:
$$\sum_{k=0}^n \cos(a_k) = \frac{\sin(a_n+r/2)-\sin(a_0 - r/2)}{2\sin(r/2)},$$
$$\sum_{k=0}^n \sin(a_k) = \frac{\cos(a_0-r/2)-\cos(a_n + r/2)}{2\sin(r/2)}.$$
\end{lemma}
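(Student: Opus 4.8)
The plan is to prove both identities at once through the complex exponential form, exploiting that $\cos a_k$ and $\sin a_k$ are the real and imaginary parts of $e^{ia_k}$. First I would write $\sum_{k=0}^n e^{ia_k} = e^{ia_0}\sum_{k=0}^n (e^{ir})^k$ and sum the geometric series, using that $e^{ir}\neq 1$ — this is the hypothesis tacitly needed for the stated right-hand sides to make sense, namely $\sin(r/2)\neq 0$, which is slightly stronger than $r\neq 0$. This gives $e^{ia_0}\dfrac{e^{ir(n+1)}-1}{e^{ir}-1}$.

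Next I would symmetrize the quotient by multiplying numerator and denominator by $e^{-ir/2}$, so that the denominator becomes $e^{ir/2}-e^{-ir/2}=2i\sin(r/2)$ and the numerator becomes $e^{ir(n+1/2)}-e^{-ir/2}$. Multiplying back through by $e^{ia_0}$ and using $a_0+rn=a_n$ yields
$$\sum_{k=0}^n e^{ia_k} = \frac{e^{i(a_n+r/2)}-e^{i(a_0-r/2)}}{2i\sin(r/2)}.$$
Taking real parts and using $\operatorname{Re}\dfrac{z}{2i\sin(r/2)} = \dfrac{\operatorname{Im} z}{2\sin(r/2)}$ gives the cosine identity; taking imaginary parts and using $\operatorname{Im}\dfrac{z}{2i\sin(r/2)} = \dfrac{-\operatorname{Re} z}{2\sin(r/2)}$ gives the sine identity.

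An equivalent purely real route, which I would keep in reserve, is to multiply each summand $\cos a_k$ by $2\sin(r/2)$ and apply the product-to-sum identity $2\sin(r/2)\cos a_k = \sin(a_k+r/2)-\sin(a_k-r/2)$; since $a_k+r/2 = a_{k+1}-r/2$, the sum over $k$ telescopes to $\sin(a_n+r/2)-\sin(a_0-r/2)$, and dividing by $2\sin(r/2)$ finishes it, the sine case being handled identically via $2\sin(r/2)\sin a_k = \cos(a_k-r/2)-\cos(a_k+r/2)$. Neither argument has a genuine obstacle; the only points requiring care are the index bookkeeping in the telescoping (equivalently, recognizing $a_0+r(n+1)-r/2$ as $a_n+r/2$) and the observation that the formula is valid precisely when $\sin(r/2)\neq 0$.
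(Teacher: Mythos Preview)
Your proof is correct and follows essentially the same approach as the paper: pass to complex exponentials $z_k=e^{ia_k}$ and sum the resulting geometric series. The only cosmetic difference is that the paper writes $2\cos(a_k)=z_k+z_k^{-1}$ and $2i\sin(a_k)=z_k-z_k^{-1}$, summing two geometric series for each identity, whereas you sum the single series $\sum z_k$ and extract real and imaginary parts at the end; your observation that the formula really requires $\sin(r/2)\neq 0$ rather than just $r\neq 0$ is a valid refinement.
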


\begin{proof}
Let us take $z_k = e^{ia_k}$. We have 
$2\cos(a_k) = z_k + z_k^{-1}$ and $2i\sin(a_k) = z_k - z_k^{-1}$. Each sum split as the addition of two geometric sums that are explicitly computed. By undoing the same change of variables, we obtain the above identities.   
\end{proof}

For $r=1$ we obtain a closed formula for the energy of the cycle:
\begin{equation}\label{eq:en_1}
\mathcal{E}(1,N) = 4\frac{\sin\left( 
\frac{\pi}{N}\left( 2 \lfloor\frac{N}{4} \rfloor + 1 \right)
\right)}{\sin \left(\frac{\pi}{N}\right)}
\end{equation}

In the case $r=2$, we take into account:
$$\left|\cos(x) + \cos(2x)\right| = 
\begin{cases}  
\cos(x)+\cos(2x) &\mbox{si } \frac{-\pi}{3}\leq x \leq \frac{\pi}{3}  \\ 
-\cos(x)-\cos(2x) &\mbox{si } \frac{\pi}{3}\leq x \leq \frac{5\pi}{3} 
\end{cases}$$
and thus we obtain:
\begin{equation}\label{eq:en_2}
\mathcal{E}(2,N) = 4\left[
\frac{
\sin\left(\frac{\pi}{N}\left(2\lfloor\frac{N}{6}\rfloor+1\right)\right)
}{\sin\left(\frac{\pi}{N}\right)} + 
\frac{
\sin\left(\frac{2\pi}{N}\left(2\lfloor\frac{N}{6}\rfloor+1\right)\right)
}{\sin\left(\frac{2\pi}{N}\right)}
\right].
\end{equation}

\begin{figure}[h] 
  \centering
  \includegraphics[width=2.67in]{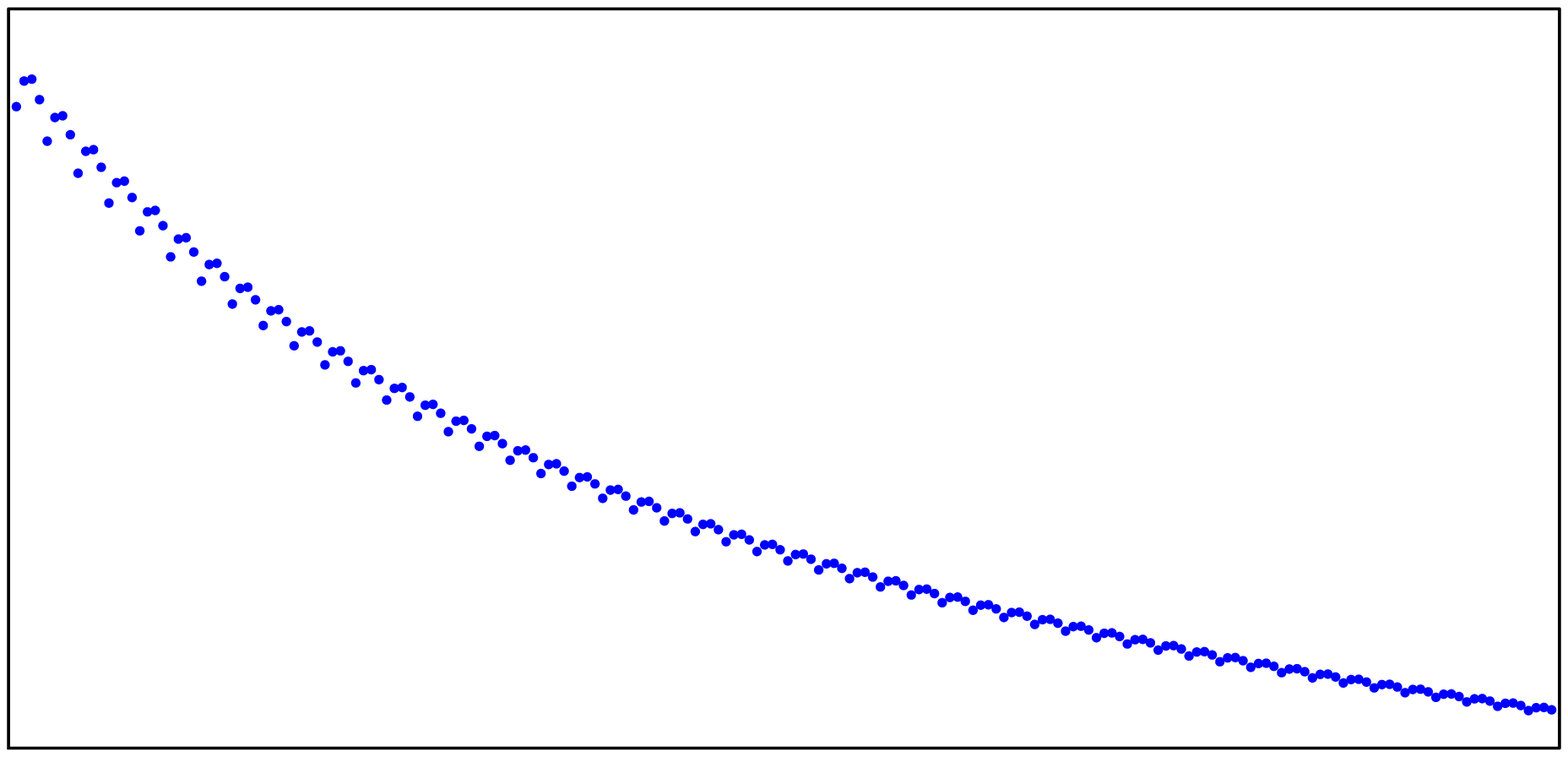} \includegraphics[width=2.67in]{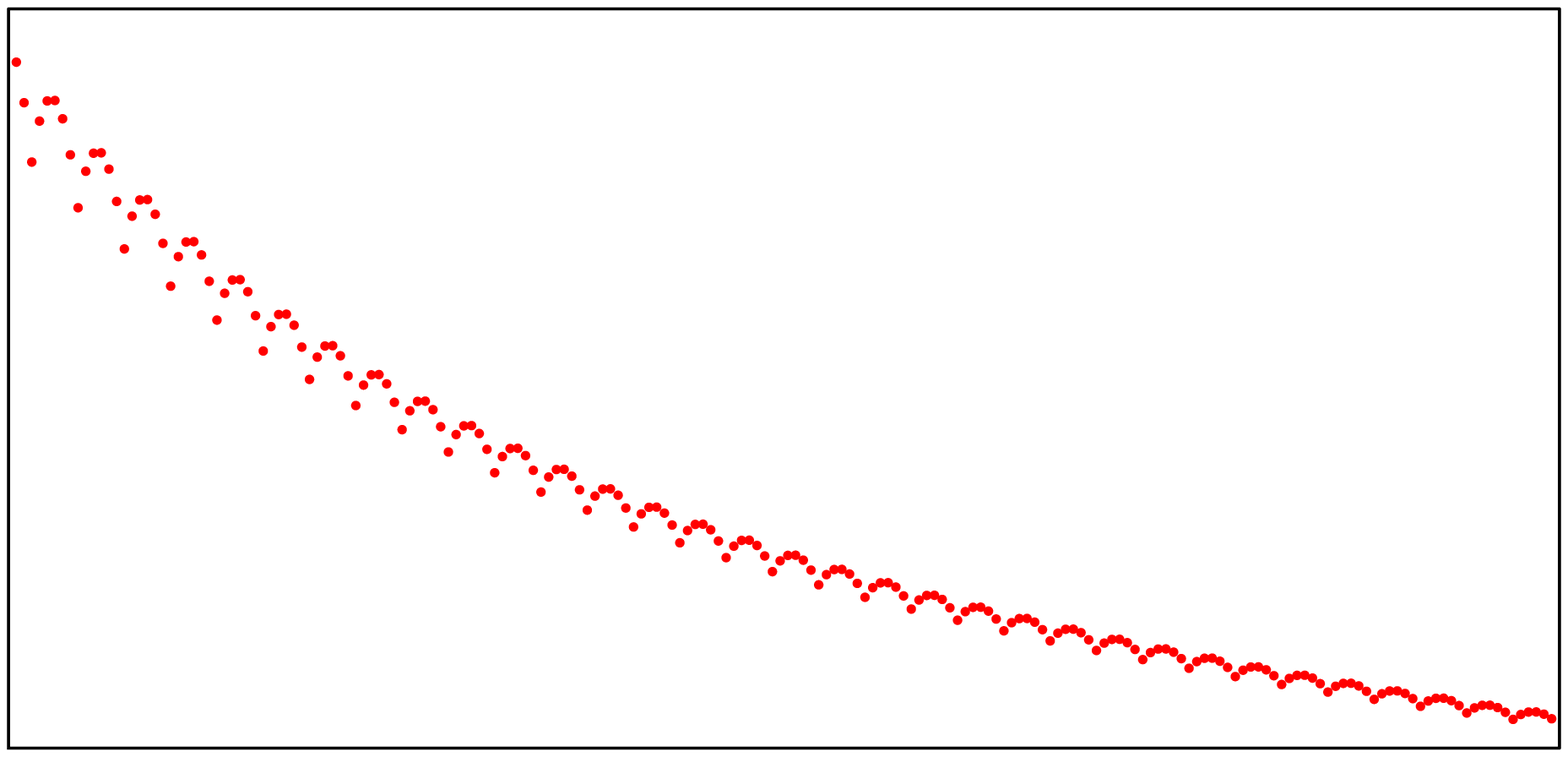}
  \caption[]{This picture illustrates the arithmetic behavior of the energy function. (Left) $x$-axis $N$, from $100$ to $300$. $y$-axis $\frac{\mathcal E(1,N)}{N-1}$ from $1.277$ to $1.287$. (Right) $x$-axis $n$, from $100$ to $300$. $y$-axis $\frac{\mathcal E(2,N)}{N-1}$ from $1.659$ to $1.672$.} 
\label{fi:energy_2}
\end{figure}

Figure \ref{fi:energy_2} illustrates the modular behavior (mod $4$ and mod $6$ respectively) of the  energy which is easily seen in formulae  \eqref{eq:en_1} and \eqref{eq:en_2}. Analogous formula, of growing complexity, may be computed for other values of $r$.

%\subsection{Regular graphs with minimum energy}
\medskip

By means of Lemma \ref{lem:trig_sum} we can also compute explicitly the energy of the graphs $G(r,2r+2)$ %= K_{2r+2}-M$ 
obtaining:
$$\mathcal{E}(r,2r+2) = 2r +  \sum_{k=1}^{2r+1} 
\left| \frac{\sin\left(k\pi - \frac{k\pi}{2r+2}\right)}
{\sin\left(\frac{k\pi}{2r+2}\right)}-1\right| = $$
$$= 2r + \sum_{k=1}^{2r+1} \left|(-1)^{k+1}-1\right| = 4r$$

Let us recall that the graph $G(r,2r+2)$ is isomorphic to $K_{2r+2}-M$; the graph
obtained by deleting a perfect matching from a complete graph of even order $2r+2$. It is a regular graph of degree $d = 2r$. It is well know that the energy of a regular graph of degree $d$ is equal or greater than $2d$ (see \cite[pag. 77]{Gutman2011}). This trivial lower bound is reached for the complete graph $K_{d+1}$. The family $K_{2r+2}-M$ gives us another example of regular graphs minimizing energy within each family of regular graphs of fixed even degree. 

\begin{theorem}
Let us consider the the graph $K_{2N}-M$ obtained by deleting the edges of a perfect matching of a complete graph of index $2N$.
\begin{itemize}
\item[(a)] $\mathcal E(K_{2N}-M) = 4N-4$ and thus $K_{2N}-M$ is non-hyperenergetic.
\item[(b)] $K_{2N}-M$ minimizes the energy in the family of regular graphs of degree $2N-2$.
\end{itemize}
\end{theorem}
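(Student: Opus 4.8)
The plan is to observe that part (a) is essentially already established in the paragraph preceding the statement: we have $K_{2N}-M \cong G(N-1, 2N)$, and substituting $r = N-1$ (so $2r+2 = 2N$) into the identity $\mathcal{E}(r,2r+2) = 4r$ derived above via Lemma \ref{lem:trig_sum} gives $\mathcal{E}(K_{2N}-M) = 4(N-1) = 4N-4$. Since $K_{2N}-M$ has $n = 2N$ vertices, the hyperenergetic threshold is $2(n-1) = 2(2N-1) = 4N-2 > 4N-4$, so the graph is non-hyperenergetic. I would write this out cleanly, perhaps recalling the key cancellation: the eigenvalues of $G(N-1,2N)$ are $u(N-1, k\pi/N)$, and for $k \neq 0$ one gets $\lambda = \frac{\sin(k\pi - k\pi/(2N))}{\sin(k\pi/(2N))} - 1 = (-1)^{k+1} - 1$, which is $0$ for $k$ odd and $-2$ for $k$ even (and $k=0$ gives $\lambda = 2N-2$); summing absolute values yields $(2N-2) + 2(N-1) = 4N-4$.

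For part (b), the plan is to invoke the standard lower bound for the energy of a regular graph: if $H$ is $d$-regular on $n$ vertices, then $\mathcal{E}(H) \geq 2d$. This is cited in the excerpt as \cite[pag.~77]{Gutman2011}. Here $d = 2N-2$, so any regular graph of degree $2N-2$ has energy at least $2(2N-2) = 4N-4$. Combined with part (a), $K_{2N}-M$ attains this lower bound exactly, hence it minimizes energy in the family of regular graphs of degree $2N-2$. I would state this in two lines.

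If the paper wants the proof to be more self-contained, I would include a short justification of the regular-graph bound: for a $d$-regular graph, $\lambda_1 = d$ (the Perron eigenvalue, since the all-ones vector is an eigenvector and $d$ is the spectral radius of a nonnegative matrix whose row sums are all $d$), and $\mathrm{tr}(A) = 0 = \sum_j \lambda_j$ forces $\sum_{j \geq 2} \lambda_j = -d$, whence $\sum_{j\geq 2} |\lambda_j| \geq |\sum_{j \geq 2}\lambda_j| = d$; therefore $\mathcal{E}(H) = \lambda_1 + \sum_{j\geq 2}|\lambda_j| \geq 2d$.

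The main obstacle here is essentially nonexistent — both halves follow from results already in hand (the computation $\mathcal{E}(r,2r+2)=4r$ and the textbook bound $\mathcal{E}(H)\geq 2d$ for $d$-regular $H$). The only things requiring care are the bookkeeping of indices (matching $K_{2N}-M$ with $G(N-1,2N)$ and thus $r = N-1$) and confirming the strict inequality $4N-4 < 4N-2$ for non-hyperenergeticity, which holds for every $N \geq 1$. So the "proof" is really an assembly of two prior facts, and I would present it as such, keeping it to a short paragraph per item.
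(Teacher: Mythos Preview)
Your proposal is correct and matches the paper's approach exactly: the paper does not give a separate proof after the theorem statement, because both parts are already established in the preceding two paragraphs --- the computation $\mathcal{E}(r,2r+2)=4r$ via Lemma~\ref{lem:trig_sum} yields (a) after the substitution $r=N-1$, and the cited lower bound $\mathcal{E}(H)\geq 2d$ for $d$-regular graphs from \cite[pag.~77]{Gutman2011} yields (b). Your optional self-contained justification of the $2d$ bound goes slightly beyond what the paper provides, but is a natural addition.
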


\section{Asymptotic behaviour}

A direct consequence of Theorem 2 in \cite{Shparlinski2006} is that the limit $lim_{N\to\infty} \frac{\mathcal{E}(r,N)}{N-1}$ exists and it is bigger than
$4 \log(2r)/\pi^3$. Here we compute it explicitly.

\begin{theorem}
For each $r>0$ the limit:
$$lim_{N\to\infty} \frac{\mathcal{E}(r,N)}{N-1} = I_r$$
has value:
$$I_r = \frac{1}{\pi}
\int_0^\pi \left|
\frac{\sin\left(\left(\frac{1}{2}+r\right)\theta\right)}
{\sin\left(\frac{\theta}{2}\right)}-1
\right|d\theta. $$
\end{theorem}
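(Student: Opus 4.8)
The plan is to recognize the energy sum \eqref{qe:energy_sum} as, up to normalization, a Riemann sum for the integrand appearing in $I_r$, and then simply to pass to the limit.

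\emph{Step 1: a closed form for the eigenvalue function.} First I would apply Lemma \ref{lem:trig_sum} to the arithmetic sequence $a_m = m\theta$ (common difference $\theta$), obtaining $\sum_{m=0}^{r}\cos(m\theta) = \bigl(\sin\bigl((r+\tfrac12)\theta\bigr)+\sin\tfrac{\theta}{2}\bigr)\big/\bigl(2\sin\tfrac{\theta}{2}\bigr)$; subtracting the $m=0$ term and doubling yields
$$u(r,\theta)=2\sum_{m=1}^r\cos(m\theta)=\frac{\sin\bigl((r+\tfrac12)\theta\bigr)}{\sin\bigl(\tfrac{\theta}{2}\bigr)}-1=:g(\theta).$$
The apparent poles of $g$ at $\theta\in\{0,2\pi\}$ are removable, since the numerator vanishes there as well, so $g$ extends to a function of class $C^\infty$ — in particular Lipschitz — on $[0,2\pi]$, with $g(0)=g(2\pi)=2r$; hence $|g|$ is continuous on $[0,2\pi]$.

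\emph{Step 2: the energy as a sampling sum, and the limit.} By \eqref{qe:energy_sum} we have $\mathcal E(r,N)=\sum_{k=0}^{N-1}\bigl|g(2\pi k/N)\bigr|$, so that
$$\frac{\mathcal E(r,N)}{N}=\frac{1}{N}\sum_{k=0}^{N-1}\Bigl|g\Bigl(\tfrac{2\pi k}{N}\Bigr)\Bigr|$$
is exactly $\tfrac{1}{2\pi}$ times the left-endpoint Riemann sum of $|g|$ over the uniform partition of $[0,2\pi]$ into $N$ pieces. Because $|g|$ is continuous, these sums converge and $\lim_{N\to\infty}\mathcal E(r,N)/N=\tfrac{1}{2\pi}\int_0^{2\pi}|g(\theta)|\,d\theta$.

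\emph{Step 3: folding the integral and fixing the normalization.} The identities $\sin\bigl((r+\tfrac12)(2\pi-\theta)\bigr)=\sin\bigl((2r+1)\pi-(r+\tfrac12)\theta\bigr)=\sin\bigl((r+\tfrac12)\theta\bigr)$ and $\sin(\pi-\tfrac{\theta}{2})=\sin\tfrac{\theta}{2}$ give $g(2\pi-\theta)=g(\theta)$; substituting $\theta\mapsto 2\pi-\theta$ on $[\pi,2\pi]$ then yields $\tfrac{1}{2\pi}\int_0^{2\pi}|g|=\tfrac{1}{\pi}\int_0^{\pi}|g|=I_r$. Finally, $\mathcal E(r,N)/(N-1)=\tfrac{N}{N-1}\cdot\mathcal E(r,N)/N\to I_r$, which is the assertion. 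The computation is routine; the only place needing care is the observation in Step 1 that the endpoint singularities of $g$ are removable, so that $|g|$ is bounded and Riemann integrable on the closed interval and the convergence of the sampling sums to the integral is the standard one. (If an error rate were wanted one would instead use that $g\in C^1([0,2\pi])$ is Lipschitz, giving a Riemann-sum error of $O(1/N)$, but continuity alone suffices here.)
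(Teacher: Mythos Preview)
Your argument is correct. The paper does not spell out a proof of this theorem at all; it states the result and immediately moves on to bounding and evaluating $I_r$. Your Riemann-sum argument is precisely the implicit justification: rewrite $u(r,\theta)$ via Lemma~\ref{lem:trig_sum} as $D_r(\theta)-1$, observe that $\mathcal E(r,N)/N$ is the average of $|g|$ at the $N$ equally spaced nodes on $[0,2\pi)$, pass to the integral by continuity, fold by the symmetry $g(2\pi-\theta)=g(\theta)$, and adjust the normalization from $N$ to $N-1$. Each step checks out, including your remark that the apparent singularities of $g$ at $\theta=0,2\pi$ are removable (indeed $g$ equals the trigonometric polynomial $2\sum_{m=1}^r\cos m\theta$), which is the only point where one could have slipped.
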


Note that the integral $I_r$ is the $L_1$-norm of $D_r(\theta)-1$ where $D_r(\theta)$ is the Dirichlet kernel. From the triangular inequality we obtain:
$$ L_r - 1 \leq I_r \leq L_r + 1$$
where $L_r$ is the Lebesgue constant (see \cite{Zygmund} page 67).

We may evaluate analytically integral. Equation 
$$u(r,\theta)=0$$
can be solved analytically. It yields
$$\theta = \frac{2m\pi}{r}, \quad \theta = \frac{(2m+1)\pi}{r+1},
\quad m\in \mathbb Z.$$
Between $0$ and $\pi$ we obtain the partition:
$$0, \frac{\pi}{r+1}, \frac{2\pi}{r}, \frac{3\pi}{r+1},
\frac{4\pi}{r},\ldots, \pi.$$
The function $u(r,\theta)$ is positive in the first interval, negative in the second interval, and so on. This allows us to evaluate the integral obtaining, 
$$I_r = \frac{4}{\pi}\sum_{k=1}^r \sum_{m=0}^{\lfloor\frac{r}{2}\rfloor}  
\frac{\sin\left( \frac{(2m+1)k\pi}{r+1}\right) - \sin \left( \frac{2mk\pi}{r}\right)}{k}$$
and finally by application of Lemma \ref{lem:trig_sum}: 
$$I_r = 
\begin{cases}  
\displaystyle
\frac{2}{\pi} \sum_{k=1}^r \frac{1}{k}\left[
\frac{1-\cos\left(\frac{r k \pi}{r+1}\right)}{\sin \left(\frac{k\pi}{r+1}\right)} +
\frac{\cos\left(\frac{k \pi}{r}\right)- (-1)^k}{\sin \left(\frac{k\pi}{r}\right)} 
\right] &\mbox{ if $r$ odd,} \\
 \displaystyle
\frac{2}{\pi} \sum_{k=1}^r \frac{1}{k}\left[
\frac{1-(-1)^k}{\sin \left(\frac{k\pi}{r+1}\right)} +
\frac{\cos\left(\frac{k \pi}{r}\right)- \cos\left(\frac{(r+1) k \pi}{r}\right)}{\sin \left(\frac{k\pi}{r}\right)} 
\right] &\mbox{ if $r$ even.}
\end{cases}
$$
The first values can be computed explicitly obtaining, 
$$I_1 = \frac{4}{\pi}, \quad I_2 = \frac{3\sqrt{3}}{\pi}, \quad
I_3 = \frac{16\sqrt{2}-3\sqrt{3}}{3\pi}$$
and some approximate values are:
$$I_4 \simeq 1.985\ldots,\quad I_5 \simeq 2.087\ldots, \quad 
  I_6 \simeq 2.170\ldots$$
In particular we have that $I_r < 2$ for $r=1,2,3,4$ and $I_r>2$ for $r\geq 5$. It follows the following result.

\begin{theorem}\label{th:final}
The following statements hold:
\begin{itemize}
\item[(a)] For $r\leq 4$ there is a finite number of hyperenergetic graphs of the form $G(r,N)$.
\item[(b)] For each $r\geq 5$ there is a finite number on non-hyperenergetic graphs of the form $G(r,N)$. An example is given by $G(r,2r+2)$.
\end{itemize}
\end{theorem}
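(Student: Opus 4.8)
The plan is to reduce both statements to the asymptotic formula $\lim_{N\to\infty}\mathcal E(r,N)/(N-1)=I_r$ together with the sign of $I_r-2$. The elementary observation to record first is that $G(r,N)$ has exactly $N$ vertices, so by definition it is hyperenergetic precisely when $\mathcal E(r,N)>2(N-1)$, i.e.\ when $\mathcal E(r,N)/(N-1)>2$. Thus, for a fixed $r$, the hyperenergetic members of the family $\{G(r,N):N\ge 2r\}$ are exactly those with $\mathcal E(r,N)/(N-1)>2$, and the non-hyperenergetic ones exactly those with $\mathcal E(r,N)/(N-1)\le 2$.

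Next I would apply the asymptotic theorem above. If $I_r<2$, there is $N_0=N_0(r)$ with $\mathcal E(r,N)/(N-1)<2$ for all $N\ge N_0$, so $G(r,N)$ is non-hyperenergetic for every such $N$ and only the finitely many admissible values $2r\le N<N_0$ can yield a hyperenergetic graph; this proves (a) for each $r$ with $I_r<2$. Symmetrically, if $I_r>2$ then all but finitely many $G(r,N)$ are hyperenergetic, which proves (b) for each $r$ with $I_r>2$. To see that the exceptional set in (b) is not empty, note that $G(r,2r+2)\cong K_{2r+2}-M$ has $2r+2$ vertices and energy $4r$ by the computation above, and $4r<2(2r+2)-2=4r+2$, so this particular graph is non-hyperenergetic.

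It then remains to supply the arithmetic input: $I_r<2$ for $r\in\{1,2,3,4\}$ and $I_r>2$ for \emph{every} integer $r\ge 5$. For $r\le 4$ this is immediate from the explicit values obtained above, $I_1=4/\pi$, $I_2=3\sqrt3/\pi$, $I_3=(16\sqrt2-3\sqrt3)/(3\pi)$, together with the closed form for $I_4$, which is an explicit algebraic number built from $\sin(j\pi/5)$ and $\sin(j\pi/2)$; one only has to promote the estimate $I_4\simeq 1.985$ to a rigorous strict inequality $I_4<2$. The genuinely infinite part, and the main obstacle, is $I_r>2$ for all $r\ge 5$. I would establish this by showing the sequence $(I_r)_{r\ge 1}$ is nondecreasing, which reduces everything to the single (again algebraic) inequality $I_5>2$. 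Monotonicity should follow from a comparison argument in the spirit of the classical proof that the Lebesgue constants $L_r=\|D_r\|_1$ increase: writing $u(r,\theta)=D_r(\theta)-1$ and using $u(r+1,\theta)-u(r,\theta)=2\cos\big((r+1)\theta\big)$, one has
$$I_{r+1}\ \ge\ \frac1\pi\int_0^\pi u(r+1,\theta)\,\operatorname{sgn}\,u(r,\theta)\,d\theta\ =\ I_r+\frac2\pi\int_0^\pi\cos\big((r+1)\theta\big)\,\operatorname{sgn}\,u(r,\theta)\,d\theta,$$
so it suffices to check that the last integral is nonnegative, the sign changes of $u(r,\theta)$ (at the points $2m\pi/r$ and $(2m+1)\pi/(r+1)$ listed above) interlacing those of $\cos((r+1)\theta)$ in the pattern needed for the correlation to be $\ge 0$. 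If this interlacing estimate proves stubborn, a safe fallback is the triangle-inequality bound $I_r\ge L_r-1$ noted above together with a lower bound for $L_r$, which gives $I_r>2$ for all $r$ beyond an explicit threshold; the remaining finitely many $r$ between $5$ and that threshold are then checked one by one from the closed form for $I_r$.

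In short, the entire difficulty of the theorem is concentrated in passing from the tabulated values $I_1,\dots,I_6$ to the uniform bound $I_r>2$ for all $r\ge 5$ — by monotonicity of $(I_r)$ or, failing that, by a finite verification — everything else being a routine limit argument plus the identity $\mathcal E(r,2r+2)=4r$ already at hand.
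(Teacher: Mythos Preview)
Your approach is exactly the paper's: reduce everything to the limit $\lim_{N\to\infty}\mathcal E(r,N)/(N-1)=I_r$, compare $I_r$ with $2$, and invoke $\mathcal E(r,2r+2)=4r$ for the explicit non-hyperenergetic example in (b). The paper itself does no more than this; after tabulating $I_1,\dots,I_6$ it simply asserts ``$I_r<2$ for $r=1,2,3,4$ and $I_r>2$ for $r\ge 5$'' and declares the theorem to follow.

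Where you differ is in being more scrupulous than the paper about the clause $I_r>2$ for \emph{all} $r\ge 5$. The paper does not supply an argument for this beyond the numerical values of $I_5$ and $I_6$ (together with the earlier remark $I_r\ge L_r-1$, which by itself only kicks in once the Lebesgue constant exceeds $3$). Your proposed monotonicity route is a reasonable plan, but be aware that the step ``the correlation integral $\int_0^\pi\cos((r+1)\theta)\,\mathrm{sgn}\,u(r,\theta)\,d\theta\ge 0$'' is not established and is not obvious; the sign changes of $u(r,\theta)$ at $2m\pi/r$ and $(2m+1)\pi/(r+1)$ do not straightforwardly interlace with those of $\cos((r+1)\theta)$ in a way that makes this immediate. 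Your fallback --- use $I_r\ge L_r-1$ together with an explicit lower bound on $L_r$ (e.g.\ the classical $L_r\ge \tfrac{4}{\pi^2}\log(2r+1)$, or sharper versions) to handle all large $r$, then check the finitely many remaining $r$ from the closed form --- is the safer path and would actually constitute a complete proof, which the paper does not quite give.
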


\begin{figure}[h]
  \centering
  \includegraphics[width=2.67in]{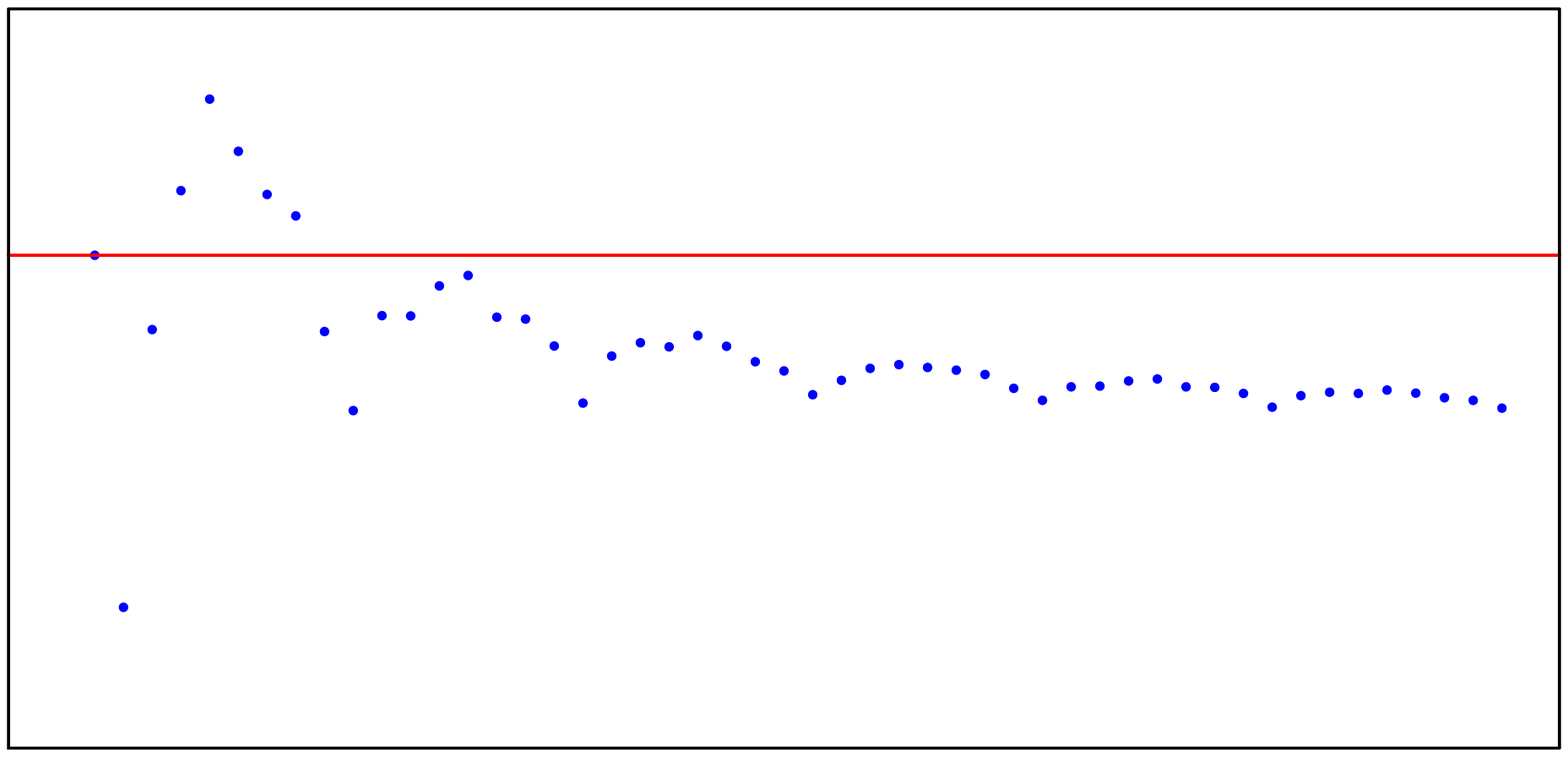} \includegraphics[width=2.67in]{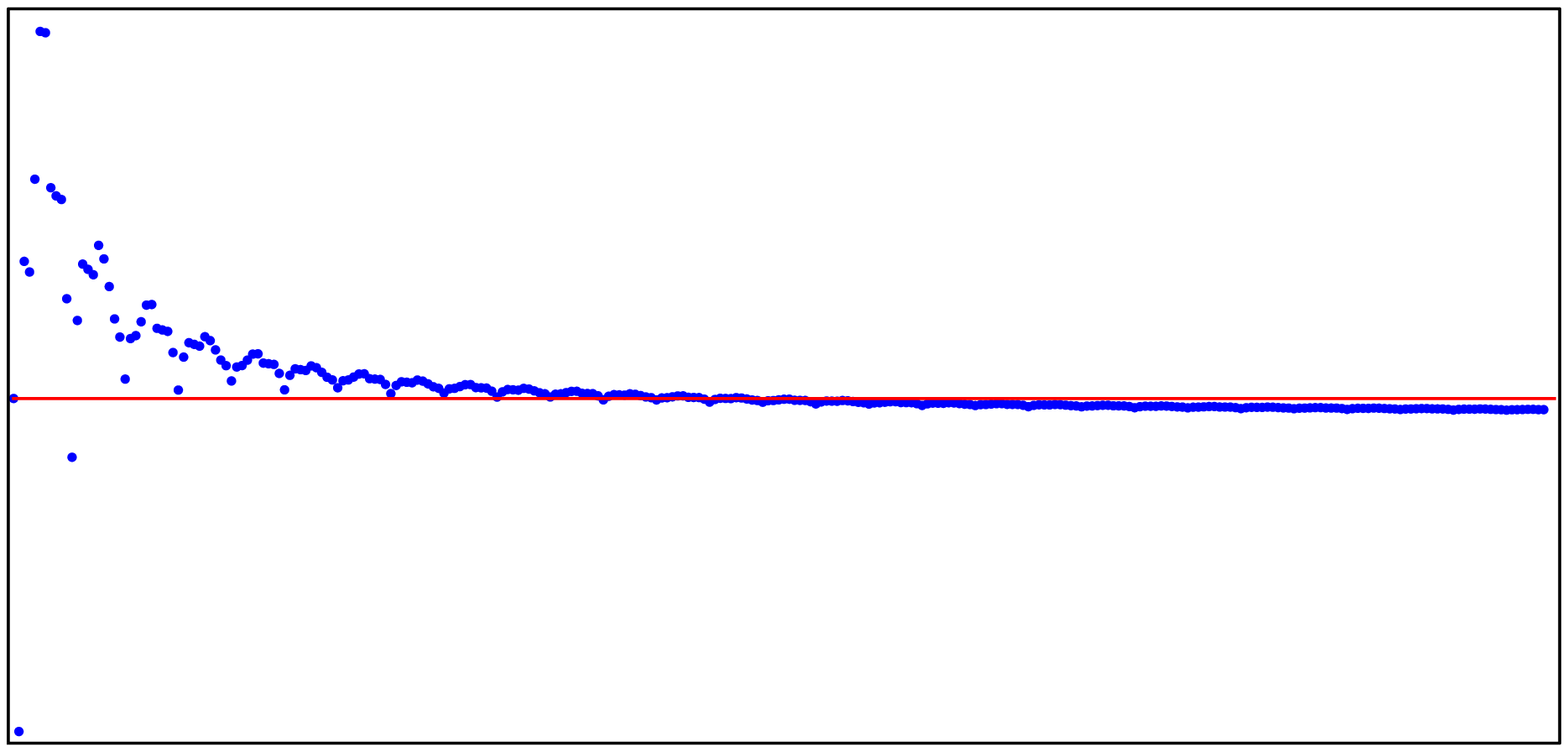} 
  \caption[]{(Left) $x$-axis $N$, from $7$ to $60$. $y$-axis $\frac{\mathcal E(3,N)}{N-1}$ from $1.6$ to $2.2$. The horizontal line corresponds to $y=2$. Dots over the horizontal line represent exceptional hyperenergetic graphs in the family $G(3,N)$. (Right) $x$-axis $N$, from $9$ to $300$. $y$-axis $\frac{\mathcal E(4,N)}{N-1}$ from $1.77$ to $2.26$. The horizontal line corresponds to $y=2$. Dots over the horizontal line represent exceptional hyperenergetic graphs in the family $G(4,N)$.} \label{figura_G4n}
\end{figure}

A numerical exploration (see fig. \ref{figura_G4n}) allows us to find all the hyperenergetic graphs of the form $G(r,N)$ with $r\leq 4$.

\begin{enumerate}
\item[(a)] There is no hyperenergetic graph of the form $G(1,N)$ or $G(2,N)$.
\item[(b)] There are five hyperenergectic graphs of the form $G(3,N)$ with $N = 12,13,14,15,16$.
\item[(c)] There is much bigger but finite family of hyperenergetic graphs of the form $G(4,N)$, ilustrated by fig. \ref{figura_G4n} (right). 
\end{enumerate}

With respect to Theorem \ref{th:final} (b), we conjecture that the only non-hyperenergetic graph of the form $G(r,N)$ with $r\geq 5$ is the graph $G(r,2r+2)$. 

\section*{Acknowledgements}

We want to thank Juan Pablo Rada for introducing us to the topic of graph energy, and sharing his experience with us. The first author acknowledges Universidad Nacional de Colombia research grant HERMES-27984. The second author  acknowledges the support of Universidad de Antioquia.

%\bibliographystyle{model1-num-names}
%\bibliography{sample.bib}

\begin{thebibliography}{99}

\bibitem{Balakrishnan2004}
R. Balakrishnan,
{\it The energy of a graph},
Linear Algebra and its Applications 387 (2004) 287--295.

\bibitem{Gut-78} I. Gutman, {\it{The energy of a graph.\/}} Ber. Math.-Statist. Sekt.
Forschungszentrum Graz \textbf{103} (1978) 1-22.

\bibitem{Gut-99} I. Gutman, {\it{Hyperenergetic molecular graphs\/}}, J. Serb. Chem. Soc. \textbf{64} (1999) 199–205.

\bibitem{Gutman2011}
I. Gutman,
{\it Hyperenergetic and hypoenergetic Graphs},
Zbornik Radova, 4(22) 2011, Pages 113--135.

\bibitem{Koolen-2000} J.H. Koolen, V. Moulton, I. Gutman, D. Vidovi\'c, {\it{More hyperenergetic molecular graphs.\/}}J. Serb. Chem. Soc. \textbf{65}, 571–575 (2000)


\bibitem{Li-12} X. Li, Y. Shi, I. Gutman, Graph energy,
{\it{Springer,}} New York, 2012.

\bibitem{Shparlinski2006}
I. Shparlinski,
{On the energy of some circulant graphs}
Linear Algebra and its Applications,
Volume 414, Issue 1, 1 April 2006, Pages 378--382.

\bibitem{Stegeman1982}
J.D. Stegeman, 
{\it On the constant in the Littlewood problem}, 
Math. Ann., 261 (1982) Pages 51--54.

\bibitem{Stevanovic2005}
D. Stevanovi\'c, I. Stankovi\'c,
{\it Remarks on hyperenergetic circulant graphs},
Linear Algebra and its Applications
Volume 400, 1 May 2005, Pages 345--348.


\bibitem{Walikar-99}
H.B. Walikar, H.S. Ramane, P.R. Hampiholi, in {\it{On the Energy of a Graph\/}}, ed. by R.
Balakrishnan, H.M. Mulder, A. Vijayakumar. Graph Connections (Allied, New Delhi, 1999), 120–-123

\bibitem{walikar-01}
H.B. Walikar, I. Gutman, P.R. Hampiholi, H.S. Ramane, {\it{Non-hyperenergetic graphs\/}}, Graph Theory Notes New York 41 (2001) 14–-16.

\bibitem{Zygmund}
A. Zygmund, 
{\it Trigonometric Series, Vols. I \& II Combined}, 
Cambridge University Press, 3rd Edition, 2002.



\end{thebibliography}

%% Authors are advised to submit their bibtex database files. They are
%% requested to list a bibtex style file in the manuscript if they do
%% not want to use model1-num-names.bst.

%% References without bibTeX database:

%\section*{References}

\end{document}